\numberwithin{equation}{section}
\numberwithin{figure}{section}
\theoremstyle{plain}
\newtheorem{thm}{\protect\theoremname}
\theoremstyle{definition}
\newtheorem{defn}[thm]{\protect\definitionname}
\theoremstyle{plain}
\newtheorem{cor}[thm]{\protect\corollaryname}
\theoremstyle{remark}
\newtheorem{rem}[thm]{\protect\remarkname}
\theoremstyle{definition}
\newtheorem{example}[thm]{\protect\examplename}
\providecommand{\corollaryname}{Corollary}
\providecommand{\definitionname}{Definition}
\providecommand{\examplename}{Example}
\providecommand{\remarkname}{Remark}
\providecommand{\theoremname}{Theorem}
\begin{document}
\title[Note on the $q$-Log Sobolev and $p$-Talagrand Inequalities on
Carnot Groups]{Note on the $q$-Logarithmic Sobolev and $p$-Talagrand Inequalities
on Carnot Groups}
\author{E. Bou Dagher}
\address{Esther Bou Dagher: \newline Department of Mathematics \newline Imperial College London \newline 180 Queen's Gate, London SW7 2AZ \newline United Kingdom}
\email{esther.bou-dagher17@imperial.ac.uk}

\providecommand{\keywords}[1]{\textbf{\textit{Keywords---}} #1}

\begin{abstract}
In the setting of Carnot groups, we prove the $q-$Logarithmic Sobolev inequality for probability measures as a function of the Carnot-Carathéodory distance. As an application, we use the Hamilton-Jacobi equation in the setting of Carnot groups to prove the $p-$Talagrand inequality and hypercontractivity. 

\tableofcontents{}
\end{abstract}

\keywords{Logarithmic Sobolev inequality, Talagrand inequality, hypercontractivity, Hamilton-Jacobi equation, Carnot-Carathéodory distance, Carnot groups}

\maketitle

\section{Introduction}

In \cite{key-57}, L. Gross obtained the Logarithmic Sobolev inequality

\begin{onehalfspace}
\begin{equation}
\int_{\mathbb{R}^{n}}f^{2}log\left(\frac{f^{2}}{\int_{\mathbb{R}^{n}}f^{2}d\mu}\right)d\mu\leq2\int_{\mathbb{R}^{n}}\vert\triangledown f\vert^{2}d\mu,\label{eq:1}
\end{equation}
where ${\displaystyle \triangledown}$ is the standard gradient on
$\mathbb{R}^{n}$ and ${\displaystyle d\mu=\frac{e^{-\frac{\vert x\vert^{2}}{2}}}{Z}d\lambda}$
is the Gaussian measure. He proved that if $\mathcal{L}$ is the non-positive
self-adjoint operator on $L^{2}\left(\mu\right)$ such that ${\displaystyle \left(-\mathcal{L}f,f\right)_{L^{2}\left(\mu\right)}=\int_{\mathbb{R}^{n}}\vert\triangledown f\vert^{2}d\mu,}$
then (\ref{eq:1}) is equivalent to the fact that the semigroup ${\displaystyle P_{t}=e^{t\mathcal{L}}}$
generated by $\mathcal{L}$ is hypercontractive: i.e. for $q\left(t\right)\leq1+\left(q-1\right)e^{2t}$
with $q>1$, we have ${\displaystyle \parallel P_{t}f\parallel_{q\left(t\right)}\leq\parallel f\parallel_{q}}$
for all $f\in L^{q}\left(\mu\right).$ 

In \cite{key-52}, D. Bakry and M. Emery extended the Logarithmic
Sobolev inequality for a larger class of probability measures defined
on manifolds under an important Curvature-Dimension condition. More
generally, if $\left(\Omega,F,\mu\right)$ is a probability space,
and $\mathcal{L}$ is a non-positive self-adjoint operator acting
on $L^{2}\left(\mu\right),$ we say that the measure $\mu$ satisfies
a Logarithmic Sobolev inequality if there is a constant $c$ such
that, for $f\in D\left(\mathcal{L}\right),$ 

\[
\int f^{2}log\frac{f^{2}}{\int f^{2}d\mu}d\mu\leq c\int f\left(-\mathcal{L}f\right)d\mu.
\]
F. Otto and C. Villani showed in their celebrated paper \cite{key-65}
that in the setting of manifolds under D. Bakry and M. Emery's Curvature-Dimension
condition, the Logarithmic Sobolev inequality implies the Talagrand
transportation cost inequality. Their proof relied on PDE's methods
and Otto calclulus in Wasserstein space \cite{key-64}. The Talagrand
transportation cost inequality was first introduced in \cite{key-66}
by M. Talagrand:
\begin{equation}
T_{w}(\mu,\nu)\leq2\int log(f)d\mu,\label{eq:2}
\end{equation}
where $\mu$ is a measure on $\mathbb{R}^{N}$ absolutely continuous
with respect to the Gaussian measure $\nu,$ $f={\displaystyle \frac{d\mu}{d\nu}}$
is the relative density, and $w(x,y)={\displaystyle \sum_{i=1}^{N}(x_{i}-y_{i})^{2}}$
is the cost of moving a unit mass from $x$ to $y.$ $T_{w}(\mu,\nu)$
is the transportation cost measuring how much effort is required to
transport a mass distributed according to $\mu$ to a mass distributed
according to $\nu$, i.e.
\[
T_{w}(\mu,\nu)={\displaystyle \underset{\pi\in\Pi(\mu,\nu)}{inf}\int_{\mathbb{R}^{N}\times\mathbb{R}^{N}}}w(x,y)d\pi(x,y),
\]
where $\Pi(\mu,\nu)$ is the set of probability measures on $\mathbb{R}^{N}\times\mathbb{R}^{N}$
with $\mu$ the first marginal and $\nu$ the second marginal. M.
Talagrand showed how the Talagrand transportation cost inequality
(\ref{eq:2}) implies the concentration of measure phenomenon, and
his approach was motivated by the work of K. Marton \cite{key-62,key-63}.
Later on, many works \cite{key-51,key-53,key-54,key-55,key-56,key-60,key-61,key-65}
studied links between the Logarithmic Sobolev inequality, the Talagrand
inequality, and the concentration of measure phenomenon, which is
one of the main tools in probability theory, statistical mechanics,
quantum information theory, stochastic dynamics, etc.
\end{onehalfspace}

In this note, in the setting of Carnot groups, we are first interested
in proving the $q-$Logarithmic Sobolev inequality (see \cite{key-72,key-71})
for $q\in\left(1,\infty\right)$, 
\begin{equation}
\int f^{q}log\frac{f^{q}}{\int f^{q}d\mu}d\mu\leq c\int\vert\triangledown_{\mathbb{}}f\vert^{q}d\mu,\label{eq:3}
\end{equation}
for measures of the form ${\displaystyle d\mu=\frac{e^{-U\left(d\right)}}{Z}d\lambda}$
. $U(d)$ is a function having a suitable growth at infinity, $\lambda$
is a natural measure like the Lebesgue measure for instance, and $d$
is a metric related to the sub-gradient ${\displaystyle \triangledown=\left(X_{1},\ldots,X_{n}\right),}$
where $X_{i}'s$ are the generators of the group's lie algebra. Since
the Laplacian is of Hörmander type and has some degeneracy, D. Bakry
and M. Emery\textquoteright s Curvature-Dimension condition in \cite{key-52}
will no longer hold true. In \cite{key-58}, W. Hebisch and B. Zegarli\'{n}ski
developed a method of studying coercive inequalities on general metric
spaces that does not require a bound on the curvature of space. Working
on a general metric space equipped with non-commuting vector fields
$\{X_{1},\ldots,X_{n}\},$ their method is based on U-bounds, where
U stands for the potential, which are inequalities of the form:
\[
\int f^{q}\eta\left(d\right)d\mu\leq C\int\vert\triangledown f\vert^{q}d\mu+D\int f^{q}d\mu,
\]
where $\eta(d)$ is a function having a suitable growth at infinity.
Those methods were applied, for example, in the works \cite{key-21-1,key-28-1,key-28-2}
in the setting of Carnot groups. In \cite{key-58}, W. Hebisch and
B. Zegarli\'{n}ski proved the $q-$Logarithmic Sobolev inequality
for the measure ${\displaystyle d\mu=\frac{e^{-\alpha d^{p}}}{Z}d\lambda}$,
where $p$ is the finite index conjugate of $q$ and $\alpha>0.$
In this note, we enlarge the class of measures that satisfy the $q-$logarithmic
Sobolev inequality to include measures of the form ${\displaystyle d\mu=\frac{e^{-U(d)}}{Z}d\lambda},$
under some growth conditions for $U(d)$, and get examples such as
$U(d)=(d+1)^{p}log(d+1)$ and $U(d)=sinh(d).$ We would like to apply
the $q-$Logarithmic Sobolev inequality to get hypercontractivity
and to obtain the $p-$Talagrand inequality on $(X,d,\mu)$ with a
constant $K:$
\begin{equation}
W_{p}(\mu,\nu)^{p}\leq\frac{1}{K}Ent_{\mu}\left(\frac{d\nu}{d\mu}\right),\label{eq:4}
\end{equation}
with $p$ finite index conjugate of $q.$ The $p-$Wasserstein distance
between two probability measures on $X$ is defined as ${\displaystyle W_{p}(\mu,\nu)^{p}={\displaystyle \underset{\pi\in\Pi(\mu,\nu)}{inf}\int_{X\times X}}d(x,y)^{p}d\pi(x,y),}$
where $\Pi(\mu,\nu)$ is the set of probability measures on $X\times X$
with $\mu$ the first marginal and $\nu$ the second marginal. \[Ent_{\mu}{\displaystyle \left(\frac{d\nu}{d\mu}\right)=\int\frac{d\nu}{d\mu}log\left(\frac{d\nu}{d\mu}\right)d\mu}\]
is the entropy functional such that $\nu$ is a probability measure
absolutely continuous with respect to $\mu.$ We note that for $p=2,$
(\ref{eq:2}) is a special case of (\ref{eq:4}).

The $p-$Talagrand inequality was introduced by Z.M. Balogh et al.
in \cite{key-67} who proved that it was implied by the $q-$Logarithmic
Sobolev inequality for $p$ the finite index conjugate of $q$; their
work generalises the result by J. Lott and C. Villani \cite{key-68}
who proved the implication for the quadratic case $p=q=2$. J. Lott
and C. Villani \cite{key-68} used the Hamilton-Jacobi infimum convolution
operator under the assumption where the space $(X,d,\mu)$ supports
local Poincaré inequality and the measure $\mu$ is a doubling measure
i.e. the measure of any open ball is positive and finite and there
exists a constant $c_{d}\geq1$ such that for all $x\in X$ and $r>0,$
\begin{equation}
{\displaystyle \mu(B(x,2r))\leq c_{d}\mu(B(x,r))}.\label{eq:5}
\end{equation}
This idea of using the Hamilton-Jacobi infimum convolution operator
was first used by S. Bobkov et al. in \cite{key-53} who explored
the equivalence between the Logarithmic Sobolev inequality with a
constant $\rho$ in $\mathbb{R}^{n}$ and hypercontractivity of the
quadratic Hamilton-Jacobi semigroup $Q_{t}$ i.e.
\[
{\displaystyle ||e^{Q_{t}f}||_{a+\rho t}\leq}||e^{f}||_{a},
\]
for every bounded measureable function $f$ on $\mathbb{R}^{n},$
$t\geq0,$ $a\in\mathbb{R},$ and $||.||_{p}$ the $L^{p}-$norm with
respect to $\mu.$

In the setting of the Heisenberg group $\mathbb{H}$, Z.M. Balogh
et al. in \cite{key-67} used W. Hebisch and B. Zegarli\'{n}ski's
\cite{key-58} $q-$Logarithmic Sobolev inequality for the measure
${\displaystyle d\mu=\frac{e^{-\alpha d^{p}}}{Z}d\lambda},$ where
$d$ is the Carnot-Carathéodory distance, to obtain the $p-$Talagrand
inequality. To prove the above implication, Z.M. Balogh et al. observed
that by Pansu's differentiability theorem \cite{key-69}, for Lipschitz
continuous functions $f:\mathbb{H}\rightarrow\mathbb{R},$ the norm
of the sub-Riemannian gradient $|\triangledown f(x)|=\left({\displaystyle \sum_{i=1}^{N}|X_{i}f|^{2}}\right)^{\frac{1}{2}}$used
in \cite{key-58} coincides with the metric subgradient in \cite{key-67}
for $\mu_{p}$ almost every $x$ for which $|\triangledown f(x)|>0.$

In section 2, we introduce the Carnot group and the Hamilton-Jacobi
equation in that setting (\cite{key-70,key-34-1}). In section 3,
we extend W. Hebisch and B. Zegarli\'{n}ski's results in \cite{key-58}
by enlarging the class of measures that satisfy the $q-$logarithmic
Sobolev inequality to include measures of the form ${\displaystyle d\mu=\frac{e^{-U(d)}}{Z}d\lambda},$
under some growth conditions for $U(d)$, and get examples such as
$U(d)=(d+1)^{p}log(d+1)$ and $U(d)=sinh(d).$ As an application,
we adapt Z.M. Balogh et al.'s Theorem 4.1 \cite{key-67} to get the
$p-$Talagrand inequality (section 4), and we adapt S. Bobkov et al.'s
Theorem 2.1 in \cite{key-53} to get hypercontractivity (section 5).
However, instead, we use the Hamilton-Jacobi equation in the setting
of Carnot groups by F. Dragoni \cite{key-70}. The advantage of doing
so is that the restriction (\ref{eq:5}) to have $\mu$ a doubling
measure is no longer required! 

\section{The Carnot Group and the Hamilton-Jacobi Equation}

Carnot groups are geodesic metric spaces that appear in many mathematical
contexts like harmonic analysis in the study of hypoelliptic differential
operators (\cite{key-74,key-75}) and in geometric measure theory
(see extensive reference list in the survey paper \cite{key-76}).
The following series of definitions are from \cite{key-73}:
\begin{defn}
We say that a Lie group on $\mathbb{R}^{N}$ , $\mathbb{G=}(\mathbb{R}^{N},\circ)$
is a (homogeneous) Carnot group if the following properties hold:

(C.1) $\mathbb{R}^{N}$ can be split as $\mathbb{R}^{N}=\mathbb{R}^{N_{1}}\times...\times\mathbb{R}^{N_{r}},$
and the dilation $\delta_{\lambda}:\mathbb{R}^{N}\rightarrow\mathbb{R}^{N}$
\[
\delta_{\lambda}(x)=\delta_{\lambda}(x^{(1)},...,x^{(r)})=(\lambda x^{(1)},\lambda^{2}x^{(2)},...,\lambda^{r}x^{(r)}),\;\;\;\;\;\;\;\;\;x^{(i)}\in\mathbb{R}^{N_{i}},
\]
 is an automorphism of the group $\mathbb{G}$ for every $\lambda>0.$
Then $\mathbb{}(\mathbb{R}^{N},\circ,\delta_{\lambda})$ is a homogeneous
Lie group on $\mathbb{R}^{N}$ . Moreover, the following condition
holds: 

(C.2) If $N_{1}$ is as above, let $Z_{1},...,Z_{N_{1}}$ be the left
invariant vector fields on $\mathbb{G}$ such that $Z_{j}(0)=\partial/\partial x_{j}|_{0}$
for $j=1,...,N_{1}$. Then
\[
rank(Lie{\{Z_{1},...,Z_{N_{1}}\}}(x))=N\;\;\;\;\;\;\forall x\in\mathbb{R}^{N}.
\]

If (C.1) and (C.2) are satisfied, we shall say that the triple $\mathbb{G}=(\mathbb{R}^{N},\circ,\delta_{\lambda})$
is a homogeneous Carnot group. We also say that $\mathbb{G}$ has
step $r$ and $N_{1}$ generators. The vector fields $Z_{1},...,Z_{N_{1}}$
will be called the (Jacobian) generators of $\mathbb{G}$.
\end{defn}

\begin{defn}
The vector valued operator $\triangledown:=(Z_{1},Z_{2},...,Z_{N_{1}})$
is called the sub-gradient on $\mathbb{G},$ and $\triangle={\displaystyle \sum_{i=1}^{N_{1}}Z_{i}^{2}}$
is called the sub-Laplacian on $\mathbb{G}.$
\end{defn}

\begin{defn}
We say that $\gamma$ is horizontal if there exist measurable functions
$a_{1},\ldots,a_{N_{1}}:\left[0,1\right]\rightarrow\mathbb{R}$ such
that
\[
\gamma'\left(t\right)=\sum_{i=1}^{N_{1}}a_{i}\left(t\right)Z_{i}\left(\gamma\left(t\right)\right)
\]
for almost all $t\in\left[0,1\right]$ i.e. $~\gamma^{'}\left(t\right)\in Span\left\{ Z_{1}\left(\gamma\left(t\right)\right),\ldots,Z_{N_{1}}\left(\gamma\left(t\right)\right)\right\} $
almost everywhere. For such a horizontal curve $\gamma,$ we define
the length of $\gamma$ to be
\[
\vert\gamma\vert=\int_{0}^{1}\left(\sum_{i=1}^{N_{1}}a_{i}^{2}\left(t\right)\right)^{\frac{1}{2}}dt.
\]
\end{defn}

\begin{defn}
The Carnot-Carathéodory distance or the control distance between two
points $x$ and $y$ is defined by
\[
d\left(x,y\right)=inf\left\{ t\vert\gamma:\left[0,t\right]\rightarrow \mathbb{G},~\gamma\left(0\right)=x,~\gamma\left(t\right)=y~\vert\gamma'\left(s\right)\vert\leq1\;\;\forall s\in\left[0,t\right]\right\} ,
\]
where $\gamma:\left[0,1\right]\rightarrow \mathbb{G}$ is an absolutely continuous
horizontal path on $\left[0,1\right],$ i.e. if for every $\ensuremath{\varepsilon>0}$
there exists a $\delta>0$ such that when a finite number of pairwise
disjoint subintervals $\left[x_{k},y_{k}\right]$ of $\left[0,1\right]$
satisfy $\ensuremath{\sum_{k}^ {}\vert y_{k}-x_{k}\vert<\delta,}$
then $\sum_{k}^ {}d\left(\gamma\left(y_{k}\right),\gamma\left(x_{k}\right)\right)<\varepsilon.$ 

It can be shown, proof found in \cite{key-77}, that $d$ is associated
to the sub-gradient through:
\[
\vert\triangledown f\left(x\right)\vert=\underset{d\left(x,y\right)\rightarrow0}{limsup}\frac{\vert f\left(x\right)-f\left(y\right)\vert}{d\left(x,y\right)}.
\]
We will be using F. Dragoni's Theorem 4 of \cite{key-70} to get hypercontractivity
and $p-$Talagrand inequality from the $q-$Logarithmic sobolev inequality.
\end{defn}

\begin{thm}[F. Dragoni \cite{key-70}]
Let $\phi:[0,\infty)\rightarrow[0,\infty)$ be a continous, non-decreasing,
and convex function with $\phi(0)=0.$ Let $f:\mathbb{R}^{n}\rightarrow\mathbb{R}$
a lower semicontinuous function such that there exists $C>0:$
\[
f(x)\geq-C(1+d(x)).
\]
Then a viscosity solution for the Hamilton-Jacobi problem in the Carnot
Group $\mathbb{G}$

$\begin{cases}
u_{t}(x,t)+\phi(|\mathbb{\triangledown}u(x,t)|)=0\text{ } & (x,t)\in\mathbb{G}\times(0,\infty)\\
u(x,0)=f(x) & x\in\mathbb{G}
\end{cases}$ \\
is given by the Hopf-Lax formula
\[
Q_{t}f(x):=\underset{y\in\mathbb{G}\mathbb{}}{inf}\left\{ t\phi^{*}\left(\frac{d(x,y)}{t}\right)+f(y)\right\} ,
\]

where $d(x,y)$ is the Carnot-Carathéodory distance as in Definition
4, $\triangledown$ is the sub-gradient as in Definition 2, and $\phi^{*}$
is the Legendre transform of $\phi.$
\end{thm}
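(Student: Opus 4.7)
The plan is to establish the theorem in four steps, paralleling the classical Hopf--Lax analysis on $\mathbb{R}^n$ while carefully accounting for the sub-Riemannian geometry of $\mathbb{G}$: verify that the infimum is attained, check the initial condition, establish the semigroup identity $Q_{t+s}f = Q_t(Q_s f)$, and derive the viscosity sub- and supersolution properties. First, I would confirm that $Q_t f(x)$ is finite. Combining the lower bound $f(y) \geq -C(1 + d(y))$ with the reverse triangle inequality $d(y) \geq d(x,y) - d(x)$ and the superlinearity of $\phi^*$ (which follows from $\phi(0)=0$ together with convexity), one sees that $t\phi^*(d(x,y)/t) + f(y) \to +\infty$ as $d(x,y) \to \infty$. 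Since $(\mathbb{G}, d)$ is geodesic and proper by Chow's theorem, and $f$ is lower semicontinuous, the infimum is attained on some bounded subset of $\mathbb{G}$.

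Second, I would verify the initial condition and the semigroup identity. Testing with $y = x$ in the infimum gives $Q_t f(x) \leq f(x)$ since $\phi^*(0) = 0$; for the reverse, coercivity of $\phi^*$ forces any near-minimizer $y_t$ to satisfy $d(x, y_t) \to 0$ as $t \to 0^+$, and lower semicontinuity of $f$ yields $\liminf_{t \to 0^+} Q_t f(x) \geq f(x)$. The semigroup identity reduces to a convexity estimate: for any $y, z \in \mathbb{G}$, the triangle inequality together with convexity and monotonicity of $\phi^*$ on $[0,\infty)$ gives
\[
(t+s)\, \phi^*\!\left(\frac{d(x,y)}{t+s}\right) \leq t\, \phi^*\!\left(\frac{d(x,z)}{t}\right) + s\, \phi^*\!\left(\frac{d(z,y)}{s}\right),
\]
yielding $Q_{t+s} f \leq Q_t(Q_s f)$ directly. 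The reverse inequality follows by picking $z$ on a Carnot--Carath\'eodory geodesic from $x$ to a minimizer $y$ of $Q_{t+s} f(x)$, at the parameter that produces equality in the displayed convexity estimate.

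Third, I would verify the viscosity properties at an interior point $(x_0, t_0) \in \mathbb{G} \times (0, \infty)$. Using the semigroup identity with a small $h > 0$, if $\psi$ is a smooth test function touching $Q_t f$ from below at $(x_0, t_0)$, the inequality
\[
\psi(x_0, t_0) = Q_{t_0} f(x_0) \leq h\, \phi^*\!\left(\frac{d(x_0, x)}{h}\right) + \psi(x, t_0 - h) + o(1)
\]
holds for every $x$ near $x_0$. Choosing $x$ on a horizontal geodesic through $x_0$ with $d(x_0, x) = h r$ for fixed $r \geq 0$, Taylor-expanding $\psi$ in time, and passing to the supremum over $r \geq 0$ produces, via the Legendre duality $\phi(s) = \sup_{r \geq 0}(rs - \phi^*(r))$, the inequality $\psi_t(x_0, t_0) + \phi(|\triangledown \psi(x_0, t_0)|) \geq 0$. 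The subsolution inequality follows symmetrically, testing $\psi$ against $f(y)$ itself along horizontal variations of $y$ near $x_0$, and invoking the metric identification $|\triangledown \psi(x_0)| = \limsup_{d(x_0, x) \to 0} |\psi(x_0) - \psi(x)| / d(x_0, x)$ from Definition 4 to match the spatial variation of $\psi$ with the sub-gradient.

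The main obstacle is the third step: the Carnot--Carath\'eodory distance $d$ is not smooth with respect to the ambient Euclidean structure, and the sub-gradient $\triangledown = (Z_1, \dots, Z_{N_1})$ detects only horizontal directions. The classical Hopf--Lax perturbation argument must therefore be routed through horizontal Carnot--Carath\'eodory geodesics rather than Euclidean straight lines, and the metric-gradient identity of Definition 4 is the key device ensuring that the inf-convolution $\phi^*(d(x,y)/t)$ sees only the horizontal gradient $|\triangledown \psi|$ and not any forbidden transverse component. Once the geodesic variations are set up and combined with convexity, superlinearity, and Legendre duality of $\phi^*$, the viscosity sub- and supersolution identities close the argument.
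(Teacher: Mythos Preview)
The paper does not prove this theorem: it is quoted verbatim as Theorem~4 of Dragoni~\cite{key-70} and then used as a black box in Sections~4 and~5. There is therefore no in-paper proof to compare your proposal against.

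As for the proposal itself, the four-step architecture (finiteness/attainment, initial data, semigroup identity, viscosity test-function argument) is the standard Hopf--Lax program and is the right shape. Two remarks. First, a minor one: properness of $(\mathbb{G},d)$, which you invoke for attainment of the infimum, is true but comes from the homogeneous structure rather than from Chow's theorem alone. Second, and more substantively, your Step~3 sketches the viscosity verification by perturbing a smooth test function $\psi$ along horizontal geodesics and invoking the metric-gradient identity. This is where the genuine sub-Riemannian difficulty lives, and your outline underplays it: the Carnot--Carath\'eodory distance is typically not differentiable along the center, horizontal geodesics need not vary smoothly in their endpoints, and the standard Euclidean Taylor expansion of $\psi$ along such a geodesic does not immediately produce $|\triangledown\psi|$ without further argument (Pansu differentiability, or a careful use of the eikonal identity $|\triangledown d|=1$ a.e.). Dragoni's own proof in~\cite{key-70} proceeds via metric Hopf--Lax theory and semicontinuous-envelope/comparison arguments rather than direct geodesic perturbation, precisely to sidestep these regularity issues. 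Your plan is not wrong in spirit, but the supersolution/subsolution step as written would need substantial additional work to close.
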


In this note, we will be considering $f(x)$ to be a continuous and
bounded function and we will be choosing the function ${\displaystyle \phi(s)=\frac{s^{q}}{q},}$
where $1<q\leq2.$

Thus, the viscosity solution for the Hamilton-Jacobi problem in the
Carnot group $\mathbb{G}$
\begin{equation}
\begin{cases}
u_{t}(x,t)+\frac{|\triangledown u(x,t)|^{q}}{q}=0\text{ } & (x,t)\in\mathbb{}\mathbb{G}\times(0,\infty)\\
u(x,0)=f(x) & x\in\mathbb{}\mathbb{G}
\end{cases}\label{eq:}
\end{equation}

is given by the Hopf-Lax formula

\[
Q_{t}f(x)=\underset{y\in\mathbb{G}}{inf}\left\{ \frac{d(x,y)^{p}}{t^{p-1}}+f(y)\right\} ,
\]

where ${\displaystyle \frac{1}{p}+\frac{1}{q}=1.}$

\section{$q-$Logarithmic Sobolev Inequality}

Given the probability measure ${\displaystyle d\mu=\frac{e^{-U}}{Z}d\lambda},$
where $U$ is an increasing unbounded function and $Z$ is the normalization
constant. To obtain the $q-$Logarithmic Sobolev inequality, we will
need the following theorems stated in the beginning of this section
by W. Hebisch and B. Zegarli\'{n}ski in \cite{key-58}.

Let $\lambda$ be a measure satisfying the q-Poincaré inequality for
every ball ${\displaystyle B_{R}=\{x:N\left(x\right)<R\},~}$ i.e.
there exists a constant $C_{R}\in\left(0,\infty\right)$ such that
\[
\frac{1}{\vert B_{R}\vert}\int_{B_{R}}\left|f-\frac{1}{\vert B_{R}\vert}\int_{B_{R}}f\right|^{q}d\lambda\leq C_{R}\frac{1}{\vert B_{R}\vert}\int_{B_{R}}\vert\triangledown f\vert^{q}d\lambda,
\]

where $1\leq q<\infty.$

Note that we have this Poincaré inequality on balls in the setting
of Nilpotent lie groups thanks to J. Jerison's celebrated paper \cite{key-59}. 
\begin{thm}[W. Hebisch, B. Zegarli\'{n}ski \cite{key-58}]
 Let $\mu$ be a probability measure on $\mathbb{R}^{n}$ which is
absolutely continuous with respect to the measure $\lambda$ and such
that
\begin{equation}
\int f^{q}\eta d\mu\leq C\int\vert\triangledown f\vert^{q}d\mu+D\int f^{q}d\mu\label{eq:u1}
\end{equation}
with some non-negative function $\eta$ and some constants $C,D\in\left(0,\infty\right)$
independent of a function $f.$ If for any $L\in\left(0,\infty\right)$
there is a constant $A_{L}$ such that ${\displaystyle \frac{1}{A_{L}}\leq\frac{d\mu}{d\lambda}\leq A_{L}}$
on the set $\left\{ \eta<L\right\} $ and, for some $R\in\left(0,\infty\right)$
(depending on L), we have $\left\{ \eta<L\right\} \subset B_{R},$
then $\mu$ satisfies the q-Poincaré inequality
\[
\mu\vert f-\mu f\vert^{q}\leq c\mu\vert\triangledown f\vert^{q}
\]
with some constant $c\in\left(0,\infty\right)$ independent of $f.$
\end{thm}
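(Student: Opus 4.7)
The plan is to deduce the Poincaré inequality for $\mu$ by splitting the integral $\int|f-\mu f|^q d\mu$ into a core region $\{\eta<L\}$, where $\mu$ is comparable to $\lambda$, and a tail region $\{\eta\geq L\}$, which will be absorbed using the U-bound hypothesis (\ref{eq:u1}). First, for any constant $a$, the triangle inequality together with Jensen's inequality gives $\int|f-\mu f|^q d\mu\leq 2^q\int|f-a|^q d\mu$, so it suffices to bound the right-hand side for a well-chosen $a$. Fix a large parameter $L$, take $R=R(L)$ from the hypothesis so that $\{\eta<L\}\subset B_R$, and pick $L'\geq L$ large enough that $B_R\subset\{\eta<L'\}$ (using that $B_R$ is bounded and $\eta$ is locally finite). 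Then the density bound $A_{L'}^{-1}\leq d\mu/d\lambda\leq A_{L'}$ holds throughout $B_R$.

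For the tail, since $\eta\geq L$ on $\{\eta\geq L\}$ and $|\triangledown(f-a)|=|\triangledown f|$, applying (\ref{eq:u1}) to $f-a$ yields
\[
\int_{\{\eta\geq L\}}|f-a|^q d\mu\leq\frac{1}{L}\int|f-a|^q\eta\, d\mu\leq\frac{C}{L}\int|\triangledown f|^q d\mu+\frac{D}{L}\int|f-a|^q d\mu.
\]
For the core, choose $a=|B_R|^{-1}\int_{B_R}f\, d\lambda$. Using $d\mu/d\lambda\leq A_{L'}$ on $B_R$, then the local $q$-Poincaré inequality of Jerison \cite{key-59} on $B_R$, and then $d\lambda/d\mu\leq A_{L'}$ on $B_R$, one obtains in succession
\[
\int_{\{\eta<L\}}|f-a|^q d\mu\leq A_{L'}\int_{B_R}|f-a|^q d\lambda\leq A_{L'}C_R\int_{B_R}|\triangledown f|^q d\lambda\leq A_{L'}^{2}C_R\int|\triangledown f|^q d\mu.
\]

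Adding the two estimates gives
\[
\int|f-a|^q d\mu\leq\Bigl(A_{L'}^{2}C_R+\frac{C}{L}\Bigr)\int|\triangledown f|^q d\mu+\frac{D}{L}\int|f-a|^q d\mu.
\]
Choosing $L>2D$ lets me absorb the last term into the left-hand side, and combining with the step $\int|f-\mu f|^q d\mu\leq 2^q\int|f-a|^q d\mu$ yields the desired inequality with an explicit constant $c=2^{q+1}(A_{L'}^{2}C_R+C/L)$ independent of $f$. The main obstacle is organising the domain relations so that a single comparability constant governs the back-and-forth between $\mu$ and $\lambda$ on the entire ball $B_R$: the hypothesis only provides $\{\eta<L\}\subset B_R$ rather than the reverse inclusion, which is precisely why the auxiliary choice of $L'\geq L$ is needed. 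Beyond this small bookkeeping point, the argument is routine balancing of the two pieces.
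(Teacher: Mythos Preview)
The paper does not supply its own proof of this statement: Theorem~6 is quoted from Hebisch--Zegarli\'{n}ski \cite{key-58} and used as a black box, so there is no in-paper argument to compare against.

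Your proof is the standard one and is essentially the argument in \cite{key-58}. The only point that deserves comment is exactly the one you flag yourself: the step ``pick $L'\geq L$ large enough that $B_R\subset\{\eta<L'\}$'' needs $\eta$ to be bounded on $B_R$, and the abstract hypotheses only provide the inclusion $\{\eta<L\}\subset B_R$, never the reverse. In every concrete instance in this paper $\eta$ is a continuous function of the distance $d$, hence locally bounded, so the step is harmless; but strictly speaking it is an extra (very mild) assumption on $\eta$, not a consequence of the stated hypotheses. A small cosmetic remark: in the core estimate the first comparison $d\mu\le A_L\,d\lambda$ only uses the bound on $\{\eta<L\}$, so the constant can be written $A_L A_{L'}C_R$ rather than $A_{L'}^2 C_R$. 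Otherwise the argument is clean and complete.
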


\begin{thm}[W. Hebisch, B. Zegarli\'{n}ski \cite{key-58}]
Suppose the following Sobolev inequality is satisfied
\[
\left(\int\vert f\vert^{q+\varepsilon}d\lambda\right)^{\frac{q}{q+\varepsilon}}\leq a\int\vert\triangledown f\vert^{q}d\lambda+b\int\vert f\vert^{q}d\lambda,
\]
and the following bound is true
\begin{equation}
\mu\left(\vert f\vert^{q}\left[\vert\triangledown U\vert^{q}+U\right]\right)\leq C'\mu\vert\triangledown f\vert^{q}+D'\mu\vert f\vert^{q}.\label{eq:u2}
\end{equation}
Then, the following inequality is true
\[
\mu\left(f^{q}log\frac{f^{q}}{\mu f^{q}}\right)\leq C\mu\vert\triangledown f\vert^{q}+D\mu\vert f\vert^{q}.
\]
Moreover, if $q\in\left(1,2\right]$ and the following $q-$Poincaré
inequality holds $\mu\vert f-\mu f\vert^{q}\leq\frac{1}{M}\mu\vert\triangledown f\vert^{q},$
then one has
\begin{equation}
\mu\left(f^{q}log\frac{f^{q}}{\mu f^{q}}\right)\leq c\mu\vert\triangledown f\vert^{q}\label{eq:lsq}
\end{equation}
with some constant $c\in\left(0,\infty\right)$ independent of $f.$
\end{thm}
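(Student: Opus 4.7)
The plan is to first establish a defective $q$-logarithmic Sobolev inequality for $\mu$ by combining the Sobolev inequality on $\lambda$ with the U-bound, and then, under the additional hypotheses $q \in (1,2]$ and the $q$-Poincar\'e inequality, to tighten it by a Rothaus-type argument.

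For the defective step, I would work with the weighted function $g := f \cdot (e^{-U}/Z)^{1/q}$, so that $g^q \, d\lambda = f^q \, d\mu$ and in particular $\|g\|_{L^q(\lambda)}^q = \mu(f^q)$. The key preliminary observation is that the Sobolev hypothesis yields a defective log-Sobolev inequality for $\lambda$: by concavity of the logarithm applied to the probability measure $g^q/\|g\|_q^q \, d\lambda$,
\[
\int g^q \log\frac{g^q}{\|g\|_q^q} \, d\lambda \leq \frac{q+\varepsilon}{\varepsilon}\,\|g\|_q^q \log\frac{\|g\|_{q+\varepsilon}^q}{\|g\|_q^q},
\]
and together with $\log x \leq x-1$ and the Sobolev hypothesis this produces a bound of the form $A \|\triangledown g\|_q^q + B \|g\|_q^q$.

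Next I would translate this back to $\mu$. A direct computation gives $\triangledown g = (e^{-U/q}/Z^{1/q})(\triangledown f - \frac{f}{q}\triangledown U)$, so by $|a+b|^q \leq 2^{q-1}(|a|^q + |b|^q)$ one obtains $\|\triangledown g\|_q^q \leq 2^{q-1}\mu|\triangledown f|^q + \frac{2^{q-1}}{q^q}\mu(f^q|\triangledown U|^q)$. On the other side, expanding the logarithm under $g^q = f^q e^{-U}/Z$ yields
\[
\int g^q \log\frac{g^q}{\|g\|_q^q}\, d\lambda = \mu\!\left(f^q \log\frac{f^q}{\mu f^q}\right) - \mu(f^q U) - (\log Z)\,\mu(f^q).
\]
After shifting $U$ by a constant so that $U \geq 0$, the U-bound \eqref{eq:u2} controls both $\mu(f^q|\triangledown U|^q)$ and $\mu(f^q U)$ by $C'\mu|\triangledown f|^q + D'\mu(f^q)$, and assembling the pieces delivers the defective inequality $\mu(f^q \log(f^q/\mu f^q)) \leq C\mu|\triangledown f|^q + D\mu(f^q)$.

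For the final tightening, under $q \in (1,2]$ and the $q$-Poincar\'e inequality I would invoke a Rothaus-type replacement of the form
\[
\mu\!\left(f^q \log\frac{f^q}{\mu f^q}\right) \leq \mu\!\left(|\tilde f|^q \log\frac{|\tilde f|^q}{\mu |\tilde f|^q}\right) + \kappa_q \mu |\tilde f|^q, \qquad \tilde f = f - \mu f,
\]
apply the already-proved defective inequality to $\tilde f$ (using $\triangledown \tilde f = \triangledown f$), and then absorb the residual $\mu|\tilde f|^q$ term by the Poincar\'e bound $\mu|\tilde f|^q \leq M^{-1}\mu|\triangledown f|^q$. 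I expect the main obstacle to be precisely this Rothaus-type replacement: the $q=2$ case is elementary, but for $q \in (1,2)$ it rests on a delicate convexity/expansion argument that fails for $q > 2$, which is exactly why the final statement is restricted to that range.
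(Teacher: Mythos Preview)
The paper does not actually prove this theorem: it is quoted verbatim from Hebisch--Zegarli\'{n}ski \cite{key-58} and used as a black box, so there is no in-paper proof to compare against. That said, your outline is correct and is essentially the argument of \cite{key-58}: pass to $g=f(e^{-U}/Z)^{1/q}$, derive a defective log-Sobolev for $\lambda$ from the Sobolev embedding via Jensen and $\log x\le x-1$, translate back to $\mu$ using the chain rule for $\triangledown g$ and the U-bound to absorb $\mu(f^q|\triangledown U|^q)$ and $\mu(f^qU)$, and finally tighten via the $q$-Rothaus inequality combined with $q$-Poincar\'e. Your identification of the Rothaus step as the reason for the restriction $q\in(1,2]$ is also accurate; the needed inequality $\mathrm{Ent}_\mu(|f|^q)\le \mathrm{Ent}_\mu(|\tilde f|^q)+\kappa_q\,\mu|\tilde f|^q$ is established in that range in \cite{key-71} (cited here as Bobkov--Zegarli\'{n}ski).
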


Under two assumptions on $d,$ outside the unit ball $B\equiv\left\{ d\left(x\right)<1\right\} ,$ 

\[
\Delta d\leq K+\beta\varepsilon d^{p-1},
\]
and
\[
\frac{1}{\sigma^{2}}\leq\vert\triangledown d\vert\leq1,
\]
where ${\displaystyle \varepsilon\in\left(0,\frac{1}{\sigma}\right),\:\beta\in\left(0,\infty\right),and\:\sigma\in\left[1,\infty\right),}$
W. Hebisch and B. Zegarli\'{n}ski proved that the measure
\[
d\mu_{p}=\frac{e^{-d^{p}}}{Z}d\lambda
\]
where $p>1$ satisfies both Poincaré and Logarithmic Sobolev inequalities.
To get the Poincaré inequality from Theorem 6, W. Hebisch and B. Zegarli\'{n}ski
proved a U-bound inequality (\ref{eq:u1}) with ${\displaystyle \eta=d^{p-1}}$
(Theorem 2.1 of \cite{key-58}). However, the Logarithmic Sobolev
inequality requires an additional U-bound inequality with ${\displaystyle U=d^{p}}$
(Theorem 2.4 of \cite{key-58}). The additional U-bound inequality
is obtained from a more general inequality of the form:
\[
\int d^{q\left(p-1\right)}f^{q}d\mu_{p}\leq C_{q}\int\vert\triangledown f\vert^{q}d\mu_{p}+D_{q}\int\vert f\vert^{q}d\mu_{p}
\]
which for a given $p\in\left(1,\infty\right)$ is valid for all $q\in\left[1,\infty\right)$
(Theorem 2.3 of \cite{key-58}). In their proof of the Logarithmic
Sobolev inequality, since ${\displaystyle U=d^{p},}$ and ${\displaystyle \frac{1}{p}+\frac{1}{q}=1,}$
then
\[
\vert\triangledown U\vert=\vert pd^{p-1}\triangledown d\vert\leq pd^{p-1}\vert\triangledown d\vert\leq pd^{p-1},
\]
which implies that
\[
\vert\triangledown U\vert^{q}+U\leq p^{q}d^{\left(p-1\right)q}+d^{p}=p^{q}d^{p}+d^{p}.
\]

Therefore, the second U-bound allows condition (\ref{eq:u2}) to be
satisfied in Theorem 7. So, they obtain the Logarithmic Sobolev inequality
for $p$ the finite conjugate of $q$. 

In the following, we present a generalised version of Theorems 2.1,
2.3, and 2.4 of \cite{key-58}: In place of the function $U(d)=d^{p},$
we let $U:\left[0,\infty\right)\rightarrow\left[0,\infty\right)$
be a twice differentiable and increasing function; and let ${\displaystyle d\mu_{U}=\frac{e^{-U\left(d\right)}d\lambda}{Z}}$
be a probability measure defined in terms of the function $U\left(d\right),$
where $Z$ is the normalization constant.
\begin{thm}
Assume that outside the open unit ball $B=\left\{ d\left(x\right)<1\right\} ,$
the metric $d$ satisfies the following: $\vert\triangledown d\vert$
is bounded, say $\vert\triangledown d\vert\leq1,$ and there exist
finite positive constants $K$ and $c_{0}$ such that
\begin{equation}
\Delta d\leq K+U'\left(d\right)\left(\vert\triangledown d\vert^{2}-c_{0}\right).\label{eq:ref4}
\end{equation}
(i) If $U''\leq\beta U^{'}$ for some positive constant $\beta,$
outside $B$, then for any $q\in\left(1,\infty\right),$ there exist
constants $C_{q},D_{q},$ independent of $f$, such that
\[
\int\vert f\vert^{q}\vert U'\left(d\right)\vert^{q}d\mu_{U}\leq C_{q}\int\vert\triangledown f\vert^{q}d\mu_{U}+D_{q}\int\vert f\vert^{q}d\mu_{U}.
\]
(ii) If, in addition, $U\leq\gamma U'^{q}$ for some positive constant
$\gamma$ and some $q>1,$ outside $B,$ then
\[
\int\vert f\vert^{q}U\left(d\right)d\mu_{U}\leq C_{q}\int\vert\triangledown f\vert^{q}d\mu_{U}+D_{q}\int\vert f\vert^{q}d\mu_{U}.
\]
 In the proof, we adapt, with simplification, the general lines of
proof in {[}6{]}. 
\end{thm}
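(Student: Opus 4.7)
The plan is to extract both U-bounds from a single integration-by-parts identity against the vector field $\triangledown d$. Writing $\rho=e^{-U(d)}/Z$ for the density of $\mu_U$ with respect to $\lambda$, we have $\triangledown \rho=-U'(d)\,\rho\,\triangledown d$, and therefore, for any nonnegative test function $\phi$ smooth and supported outside a slightly enlarged unit ball,
\[
\int \phi\bigl[U'(d)|\triangledown d|^2-\Delta d\bigr]\,d\mu_U=\int \triangledown d\cdot\triangledown\phi\,d\mu_U.
\]
Condition \eqref{eq:ref4} makes the bracket on the left at least $c_0 U'(d)-K$ outside $B$, which is the ``good'' quantity that needs to be extracted. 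The contribution from inside $\overline{B}$ will be handled separately using boundedness of $U'(d)$ there, and absorbed into the constant $D_q$.

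For (i), I would substitute $\phi=|f|^q U'(d)^{q-1}$. The left-hand side produces the target $c_0\int|f|^q U'(d)^q\,d\mu_U$, modulo an error $K\int|f|^qU'(d)^{q-1}\,d\mu_U$. Differentiating $\phi$ yields two contributions on the right: a cross-term bounded by $q|f|^{q-1}|\triangledown f|\,U'(d)^{q-1}|\triangledown d|$, and a curvature-type piece bounded by $(q-1)|f|^qU'(d)^{q-2}U''(d)|\triangledown d|^2$. Using $|\triangledown d|\leq 1$ together with $U''\leq\beta U'$ collapses the latter to $(q-1)\beta|f|^qU'(d)^{q-1}$, a strictly lower power of $U'(d)$ than the leading term. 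Young's inequality with exponents $(p,q)$, $1/p+1/q=1$, together with the identity $(q-1)p=q$, then bounds the cross-term by $\varepsilon|f|^qU'(d)^q+C_\varepsilon|\triangledown f|^q$, and every lower-order term of the form $|f|^qU'(d)^{q-1}$ by $\varepsilon|f|^qU'(d)^q+C_\varepsilon|f|^q$. Choosing $\varepsilon$ small enough to absorb these into the leading term on the left yields (i). Part (ii) then follows at once from (i) and the pointwise inequality $U\leq\gamma U'^q$ outside $B$, the contribution from $\overline{B}$ being absorbed into $D_q$ by continuity of $U$.

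The main design choice is the exponent $q-1$ on $U'(d)$ in the test function $\phi$: it is precisely what makes the integration by parts generate $U'(d)^q$ on the left while simultaneously keeping the stray derivative term at order $U'(d)^{q-2}U''(d)$, i.e.\ exactly the order that the hypothesis $U''\leq\beta U'$ brings down to $U'(d)^{q-1}$, a lower power than the target and hence absorbable by Young's. The step I expect to be most delicate is justifying the integration by parts rigorously: since $d$ is not smooth everywhere on a Carnot group, one needs a smooth horizontal cutoff supported away from the cut locus and from $B$, then an approximation/exhaustion argument; the boundary contributions should vanish thanks to the decay of $e^{-U(d)}$ and the growth assumptions implicit in applying the estimate to $f\in L^q(\mu_U)$.
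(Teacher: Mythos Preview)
Your proposal is correct and follows essentially the same route as the paper: integration by parts against $\triangledown d$, use of \eqref{eq:ref4} to extract $c_0U'(d)$, test function $|f|^qU'(d)^{q-1}$, then $U''\leq\beta U'$ together with Young's inequality to absorb the lower-order pieces. The only cosmetic difference is that the paper first isolates the linear estimate $\int U'(d)\,g\,d\mu_U\leq C\int g\,d\mu_U+D\int|\triangledown g|\,d\mu_U$ and then plugs $g=|f|^qU'(d)^{q-1}$ into it, whereas you insert the full test function directly into the integration-by-parts identity; the computations that follow are identical.
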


\begin{proof}
Using integration by parts and condition (\ref{eq:ref4}),
\[
\int\left(\triangledown d\right)\cdot\triangledown\left(fe^{-U\left(d\right)}\right)d\lambda=-\int\Delta d\left(fe^{-U\left(d\right)}\right)d\lambda
\]
\begin{equation}
\geq-K\int fe^{-U\left(d\right)}d\lambda-\int U'\left(d\right)\vert\triangledown d\vert^{2}fe^{-U\left(d\right)}d\lambda+c_{0}\int U'\left(d\right)fe^{-U\left(d\right)}d\lambda.\label{eq:ref5}
\end{equation}
Computing
\[
\triangledown\left(fe^{-U\left(d\right)}\right)=\triangledown fe^{-U\left(d\right)}-U'\left(d\right)e^{-U\left(d\right)}\triangledown d,
\]
and taking the dot product with $\triangledown d,$
\[
\left(\triangledown d\right)\cdot\triangledown\left(fe^{-U\left(d\right)}\right)=\left(\triangledown f\right)\cdot\left(\triangledown d\right)e^{-U\left(d\right)}-U'\left(d\right)f\vert\triangledown d\vert^{2}e^{-U\left(d\right)},
\]
and replacing in (\ref{eq:ref5}) to get
\[
c_{0}\int U'\left(d\right)fe^{-U\left(d\right)}d\lambda\leq K\int fe^{-U\left(d\right)}d\lambda+\int\vert\triangledown f\vert\vert\triangledown d\vert e^{-U\left(d\right)}d\lambda.
\]
Using the fact that $\vert\triangledown d\vert\leq1$ ($\vert\triangledown d\vert$
bounded is enough), we get
\begin{equation}
\int U'\left(d\right)fd\mu_{U}\leq C\int fd\mu_{U}+D\int\vert\triangledown f\vert d\mu_{U}.\label{eq:ref6}
\end{equation}
To prove (i), we replace $f$ in (\ref{eq:ref6}) by $f^{q}$ to get
\[
\int f^{q}\left(U'\left(d\right)\right)^{q}d\mu_{U}=\int\left[f^{q}U'\left(d\right)^{q-1}\right]U'\left(d\right)d\mu_{U}
\]
\[
\leq C\int\left[f^{q}U'\left(d\right)^{q-1}\right]d\mu_{U}+D\int\left|\triangledown\left[f^{q}U'\left(d\right)^{q-1}\right]\right|d\mu_{U}
\]
\[
=C\int ff^{q-1}U'\left(d\right)^{q-1}d\mu_{U}+D\int qf^{q-1}\vert\triangledown f\vert U'\left(d\right)^{q-1}d\mu_{U}
\]
\begin{equation}
+D\int\left(q-1\right)ff^{q-1}U'\left(d\right)^{q-2}U''\left(d\right)\vert\triangledown d\vert d\mu_{U}.\label{eq:ref7}
\end{equation}
Using Young\textquoteright s inequality with $\alpha,$ i.e. ${\displaystyle ab\leq\alpha a^{p}+\frac{b^{q}}{q\left(\alpha p\right)^{q/p}}}$
where $p$ and $q$ are conjugate finite indices in each of the three
terms on the right-hand side of (\ref{eq:ref7}), in addition to using
the conditions $\vert\triangledown d\vert\leq1$ and $U''<\beta U^{'}$
in the last term, we get
\[
\int f^{q}U'\left(d\right)^{q}d\mu_{U}\leq\frac{C}{q\left(\alpha p\right)^{\frac{q}{p}}}\int f^{q}d\mu_{U}+C\alpha\int f^{q}U'\left(d\right)^{q}d\mu_{U}+\frac{D}{\left(\alpha p\right)^{\frac{q}{p}}}\int\vert\triangledown f\vert^{q}d\mu_{U}
\]
\[
+Dq\alpha\int f^{q}U'\left(d\right)^{q}d\mu_{U}+\frac{D\beta\left(q-1\right)}{q\left(\alpha p\right)^{\frac{q}{p}}}\int f^{q}d\mu_{U}+D\left(q-1\right)\alpha\int f^{q}U'\left(d\right)^{q}d\mu_{U}.
\]
Choosing $\alpha$ small enough so that $C\alpha+Dq\alpha+D\left(q-1\right)\alpha\leq1,$
we get 
\[
\int\vert f\vert^{q}U'\left(d\right)^{q}d\mu_{U}\leq C_{q}\int\vert\triangledown f\vert^{q}d\mu_{U}+D_{q}\int\vert f\vert^{q}d\mu_{U}.
\]
\end{proof}
\begin{cor}
If the positive twice differentiable and increasing function $U(d)$
satisfies the two inequalities $U''<\beta U^{'}$ and $U\leq\gamma U'^{q},$
where $\beta$ and $\gamma$ are finite positive constants, then the
measure
\[
d\mu_{U}=\frac{e^{-U\left(d\right)}d\lambda}{Z}
\]

satisfies a q-Poincaré and a q-Logarithmic Sobolev inequality.
\end{cor}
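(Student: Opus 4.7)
The plan is to apply Theorem~8 to extract the two U-bounds needed to invoke the Hebisch--Zegarli\'{n}ski conversion theorems: Theorem~6 will yield the $q$-Poincar\'{e} inequality and Theorem~7 will yield the $q$-Logarithmic Sobolev inequality. The hypotheses $U'' < \beta U'$ and $U \leq \gamma U'^{q}$ are exactly what parts (i) and (ii) of Theorem~8 require (with the geometric assumptions on $d$ being in force), so both
\[
\int |f|^{q} U'(d)^{q} \, d\mu_{U} \leq C_{q} \int |\triangledown f|^{q} \, d\mu_{U} + D_{q} \int |f|^{q} \, d\mu_{U},
\]
\[
\int |f|^{q} U(d) \, d\mu_{U} \leq C_{q}' \int |\triangledown f|^{q} \, d\mu_{U} + D_{q}' \int |f|^{q} \, d\mu_{U}
\]
are in hand immediately.

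For the $q$-Poincar\'{e} inequality I would invoke Theorem~6 with $\eta = U(d)$ together with the second U-bound above. Since $U$ is increasing and unbounded, the sublevel set $\{U(d) < L\}$ is contained in some metric ball $B_{R(L)}$; on this ball $U(d)$ is bounded, so $d\mu_{U}/d\lambda = e^{-U(d)}/Z$ lies between two positive constants, furnishing the constant $A_{L}$ demanded by Theorem~6. The underlying local $q$-Poincar\'{e} inequality for $\lambda$ on balls is exactly Jerison's theorem~\cite{key-59}. All hypotheses of Theorem~6 are therefore satisfied, and the global $q$-Poincar\'{e} inequality for $\mu_{U}$ follows.

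For the $q$-Logarithmic Sobolev inequality I would apply Theorem~7. The required global Sobolev inequality for $\lambda$ is standard in the Carnot setting. The U-bound~(\ref{eq:u2}) follows at once from Theorem~8: using $|\triangledown d| \leq 1$,
\[
|\triangledown U(d)|^{q} + U(d) = U'(d)^{q} |\triangledown d|^{q} + U(d) \leq U'(d)^{q} + U(d),
\]
so summing the two U-bounds recovered above produces exactly~(\ref{eq:u2}). Theorem~7 then delivers a Logarithmic Sobolev inequality with an additive $D\mu |f|^{q}$ term; combining with the Poincar\'{e} inequality already established and invoking the second assertion of Theorem~7 (valid for $q \in (1,2]$, the regime used throughout this note) promotes it to the tight form~(\ref{eq:lsq}).

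The bulk of the work has already been done inside Theorem~8, so the corollary is essentially a packaging step. The one point that deserves attention is the verification that the sublevel sets of $\eta$ are bounded; this uses the unboundedness of $U$ at infinity rather than mere monotonicity, and it is this property that lets one convert the local density control on $\{\eta < L\}$ into the constant $A_{L}$ demanded by Theorem~6. Beyond that, only careful bookkeeping of constants remains.
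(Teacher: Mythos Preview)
Your proposal is correct and follows essentially the same route as the paper. The only cosmetic difference is that the paper uses the hypothesis $U \leq \gamma U'^{q}$ directly to write $|\triangledown U(d)|^{q} + U(d) \leq (1+\gamma)\,U'(d)^{q}$ and then invokes only part~(i) of Theorem~8, whereas you keep the two terms separate and sum the U-bounds from parts~(i) and~(ii); the net effect is identical. Your treatment is in fact more complete than the paper's terse proof, which explicitly verifies only condition~(\ref{eq:u2}) and does not spell out the Poincar\'{e} step via Theorem~6 as you do.
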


\begin{proof}
To get the Logarithmic Sobolev inequality, we verify inequality (\ref{eq:u2})
from Theorem 7
\[
\vert\triangledown U\left(d\right)\vert^{q}+U\left(d\right)=\vert U'\left(d\right)\triangledown d\vert^{q}+U\left(d\right)\leq\left(1+\gamma\right)U^{'q}\left(d\right)
\]
Since $U''\left(d\right)<\beta U'\left(d\right)$ by assumption, we
can use (i) of Theorem 8, and the result follows. 
\end{proof}
\begin{rem}
In Corollary 9, it is enough to assume that the inequalities
\[
U^{''}<\beta U^{'}\:and\:{}U\leq\gamma U'^{q}
\]
hold almost everywhere for $\beta$ and $\gamma$ finite positive
constants.
\end{rem}

\begin{rem}
To get Theorem 2.3 of \cite{key-58}, take $U\left(d\right)=d^{p},$
then 
\[
U'\left(d\right)=pd^{p-1}\:and\:U''\left(d\right)=p\left(p-1\right)d^{p-2}.
\]

Outside the open unit ball $B=\left\{ d\left(x\right)<1\right\} ,$
\[
\frac{U''\left(d\right)}{U'\left(d\right)}=\frac{p-1}{d}\leq p-1.
\]

Therefore, (i) from Theorem 8 holds, and the statement of Theorem
2.3 of \cite{key-58} is recovered.

Suppose $q$ is the finte conjugate index of $p,$ then,
\[
\frac{U\left(d\right)}{U^{'q}\left(d\right)}=\frac{d^{p}}{p^{q}d^{\left(p-1\right)q}}=\frac{1}{p^{q}}.
\]

Therefore, (ii) of Theorem 8 holds, and the statement of Theorem 2.4
of \cite{key-58} is recovered. 
\end{rem}

\begin{example}
The q-Poincaré and a q-Logarithmic Sobolev inequality are satisfied
for the measure
\[
d\mu_{U}=\frac{e^{-\left(d+1\right)^{p}log\left(d+1\right)}d\lambda}{Z}
\]
for $q\geq\beta$, where $\beta$ is the finite index conjugate to
$p.$ 
\end{example}

\begin{proof}
Given $U\left(d\right)=\left(d+1\right)^{p}log\left(d+1\right).$Then,
\[
U'\left(d\right)=\left(d+1\right)^{p-1}\left(plog\left(d+1\right)+1\right),
\]
and
\[
U''\left(d\right)=\left(d+1\right)^{p-2}\left(p\left(p-1\right)log\left(d+1\right)+2p-1\right).
\]
So, outside the open unit ball $B=\left\{ d\left(x\right)<1\right\} ,$
\[
\frac{U''\left(d\right)}{U'\left(d\right)}=\frac{\left(p\left(p-1\right)log\left(d+1\right)+2p-1\right)}{d+1}\leq p\left(p-1\right)+\frac{2p-1}{d+1}\leq p^{2}-\frac{1}{2}.
\]
In addition,
\[
\frac{U\left(d\right)}{U^{'q}\left(d\right)}=\frac{\left(d+1\right)^{p}log\left(d+1\right)}{\left(d+1\right)^{\left(p-1\right)q}\left(plog\left(d+1\right)+1\right)^{q}}\leq\frac{\left(d+1\right)^{p}log\left(d+1\right)}{\left(d+1\right)^{\left(p-1\right)\beta}\left(plog\left(d+1\right)+1\right)^{q}}\leq1.
\]
So, by Corollary 9, the q-Poincaré and a q-Logarithmic Sobolev inequality
are satisfied for the measure ${\displaystyle d\mu_{U}=\frac{e^{-\left(d+1\right)^{p}log\left(d+1\right)}}{Z}d\lambda}$
for $q\geq\beta,$ where $\beta$ is the finite index conjugate to
$p.$ 
\end{proof}
\begin{example}
For $U\left(d\right)=sinh\left(d\right),$ $U\left(d\right)=U^{''}\left(d\right)\leq cosh\left(d\right)=U^{'}\left(d\right).$

So, by Corollary 9, the q-Poincaré and q-Logarithmic Sobolev inequalities
hold true for the measure ${\displaystyle d\mu_{U}=\frac{e^{-sinh\left(d\right)}}{Z}d\lambda}$
for all $q\geq1.$
\end{example}

\section{$p-$Talagrand inequality for ${\displaystyle d\mu=\frac{e^{-U(d)}}{Z}d\lambda}$}

To prove that the $q-$Logarithmic Sobolev inequality implies the
$p-$Talagrand inequality, we are inspired by the proof of Theorem
4.1 in \cite{key-67}. However, instead of using the Hamilton-Jacobi
equation as in that paper, we use the following Hamilton-Jacobi equation
as in Theorem 4 in the paper by F. Dragoni \cite{key-70}. In the
setting of Carnot groups, the restriction for using a doubling measure
$\mu$ is no longer needed.

Let $d$ be the Carnot-Carathéodory distance and ${\displaystyle d\mu=\frac{e^{-U(d)}}{Z}d\lambda}$
be a measure where $Z$ is a normalization constant, $\lambda$ is
the Lebesgue measure, and $U(d)$ satisfies the conditions of Theorem
8.
\begin{thm}
Let $1<q\leq2,$ and $p\geq2$ be its finite index conjugate, so that
${\displaystyle \frac{1}{p}+\frac{1}{q}=1.}$

If $(\mathbb{G},d,\mu)$ satisfies the q-Logarithmic Sobolev \ref{eq:lsq}
inequality with constant $c={\displaystyle (q-1)\left(\frac{q}{K}\right)^{q-1}}$
for some constant $K>0,$ then it also satisfies the p-Talagrand inequality
with the same constant $K$.
\end{thm}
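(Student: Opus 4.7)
I will follow the Hamilton--Jacobi/hypercontractivity strategy of Bobkov--Gentil--Ledoux \cite{key-53} and Otto--Villani \cite{key-65}, adapted to the Carnot setting as in \cite{key-67}, but using the sub-Riemannian Hopf--Lax semigroup from Theorem 5 with $\phi(s)=s^{q}/q$,
\[
Q_{t}f(x)=\underset{y\in\mathbb{G}}{\inf}\Bigl\{\frac{d(x,y)^{p}}{t^{p-1}}+f(y)\Bigr\}.
\]
The argument splits into two stages: (a) deduce from the $q$-Logarithmic Sobolev inequality (\ref{eq:lsq}) the exponential (``infimum-convolution'') inequality
\[
\int e^{K Q_{1} f}\,d\mu\;\leq\;\exp\!\Bigl(K\int f\,d\mu\Bigr)
\]
valid for every bounded continuous $f\colon\mathbb{G}\to\mathbb{R}$; and (b) convert this into the $p$-Talagrand inequality via Kantorovich duality combined with the Donsker--Varadhan variational formula for relative entropy.

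\textbf{Step 1: hypercontractivity of $Q_{t}$.} Fix a bounded continuous $f$ and set $u(t,x):=Q_{t}f(x)$, which is a viscosity solution of $u_{t}+\frac{1}{q}|\triangledown u|^{q}=0$ by Theorem 5. Introduce the weight $\alpha(t):=K t^{p-1}$ and $N(t):=\int e^{\alpha(t) u}\,d\mu$, and consider
\[
\Phi(t):=\frac{1}{\alpha(t)}\log N(t),\qquad t>0.
\]
Direct differentiation, using the Hamilton--Jacobi equation for $\partial_{t}u$, yields
\[
\Phi'(t)=\frac{\alpha'(t)}{\alpha(t)^{2}N(t)}\,Ent_{\mu}\!\bigl(e^{\alpha(t) u}\bigr)-\frac{1}{q N(t)}\int|\triangledown u|^{q}e^{\alpha(t) u}\,d\mu.
\]
Applying the $q$-Log Sobolev inequality (\ref{eq:lsq}) to $g:=e^{\alpha u/q}$ (so $g^{q}=e^{\alpha u}$ and $|\triangledown g|^{q}=(\alpha/q)^{q}e^{\alpha u}|\triangledown u|^{q}$) bounds the entropy by $c(\alpha/q)^{q}\int|\triangledown u|^{q}e^{\alpha u}d\mu$. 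Plugging this in, $\Phi'(t)\leq 0$ is equivalent to $c\,\alpha'(t)\,\alpha(t)^{q-2}\leq q^{q-1}$; with $c=(q-1)(q/K)^{q-1}$ and the identity $p-1=1/(q-1)$ one checks
\[
\alpha'(t)\,\alpha(t)^{q-2}\;=\;K^{q-1}(p-1)\;=\;\frac{K^{q-1}}{q-1}\;=\;\frac{q^{q-1}}{c},
\]
so the critical balance holds and $\Phi$ is non-increasing. Letting $t\downarrow 0$ (where $\alpha(t)\to 0$ and $Q_{t}f\to f$ pointwise), a Taylor expansion of the logarithm gives $\Phi(0^{+})=\int f\,d\mu$; evaluating at $t=1$ produces the announced exponential inequality.

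\textbf{Step 2: exponential inequality $\Rightarrow$ $p$-Talagrand.} Let $\nu\ll\mu$ be a probability measure with density $g:=d\nu/d\mu$. By Kantorovich duality for the cost $c(x,y)=d(x,y)^{p}$ (using the symmetry of the Carnot--Carath\'eodory distance),
\[
W_{p}(\mu,\nu)^{p}=\underset{f}{\sup}\Bigl\{\int Q_{1}f\,d\nu-\int f\,d\mu\Bigr\},
\]
the supremum being over bounded continuous $f$. For any such $f$, apply the Donsker--Varadhan formula
\[
\int \phi\,g\,d\mu\;\leq\;Ent_{\mu}(g)+\log\int e^{\phi}\,d\mu
\]
with $\phi:=K Q_{1}f$; combining with the exponential inequality of Step 1 and using $\int g\,d\mu=1$ yields
\[
K\int Q_{1}f\,d\nu-K\int f\,d\mu\;\leq\;Ent_{\mu}(g).
\]
Dividing by $K$ and taking the supremum over $f$ gives $W_{p}(\mu,\nu)^{p}\leq Ent_{\mu}(g)/K$, which is exactly (\ref{eq:4}).

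\textbf{Main obstacle.} The delicate point is making the differentiation of $\Phi(t)$ in Step 1 rigorous: $Q_{t}f$ is only a viscosity (not classical) solution of the Hamilton--Jacobi equation, so the pointwise identity $\partial_{t}u=-|\triangledown u|^{q}/q$ need not hold, and $|\triangledown u|$ in the Carnot group must be interpreted through Pansu's metric sub-gradient, which coincides with the horizontal gradient only $\mu$-almost everywhere (as exploited in \cite{key-67}). I will handle this using the one-sided viscosity inequalities for $Q_{t}f$ (which are sufficient to produce $\Phi'(t)\leq 0$ in the sense of distributions), together with the a.e.\ coincidence of $|\triangledown u|$ with the metric sub-gradient along Hopf--Lax trajectories; once these analytic ingredients are in place the algebraic manipulations above go through unchanged, and the approximation of a general bounded continuous dual potential in Step 2 is standard.
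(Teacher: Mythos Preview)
Your proposal is correct and follows essentially the same route as the paper. Your function $\Phi(t)=\frac{1}{\alpha(t)}\log\int e^{\alpha(t)Q_{t}f}d\mu$ with $\alpha(t)=Kt^{p-1}$ is exactly the paper's $\phi(t)=\frac{1}{Kt^{n}}\log\int e^{Kt^{n}Q_{t}f}d\mu$ with $n=\frac{1}{q-1}=p-1$; both arguments differentiate, invoke the Hamilton--Jacobi equation from Theorem~5, and balance the constants against the $q$-Log Sobolev inequality to obtain monotonicity and hence the exponential bound $\int e^{KQ_{1}f}d\mu\le e^{K\int f\,d\mu}$. The only cosmetic difference is Step~2: the paper simply appeals to the Bobkov--G\"otze equivalence \cite{key-54} between this dual inequality and the $p$-Talagrand inequality, whereas you unpack that equivalence via Kantorovich duality and the Donsker--Varadhan formula, which is precisely the content of \cite{key-54}.
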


For every continuous bounded function $f$ where $Qf=Q_{1}f,$ we
will be proving the dual formulation of the Talagrand inequality
\begin{equation}
\int_{\mathbb{G}}e^{KQf}d\mu\leq e^{K\int_{\mathbb{G}}fd\mu},\label{eq:dt}
\end{equation}

which is shown equivalent to the $p-$Talagrand inequality (\ref{eq:4})
in \cite{key-54}.
\begin{proof}
For some $n\geq1,$ define 
\[
\phi(t)=\frac{1}{Kt^{n}}log\left(\int_{\mathbb{G}}e^{kt^{n}Q_{t}f}d\mu\right).
\]
Using the fact that $f$ is bounded and the dominated convergence
theorem, Balogh et al. prove in Theorem 4.1 of \cite{key-67} that:
\[
\underset{t\rightarrow0^{+}}{lim}\phi(t)=\int_{\mathbb{G}}fd\mu.
\]
The main goal is to prove that $\phi(t)$ is non-increasing in $t$,
which implies that $\phi(1)\leq\underset{t\rightarrow0^{+}}{lim}\phi(t).$
In other words, replacing:
\[
\phi(1)=\frac{1}{K}log\left(\int_{\mathbb{G}}e^{kQf}d\mu\right)\leq\underset{t\rightarrow0^{+}}{lim}\phi(t)=\int_{\mathbb{G}}fd\mu.
\]
Rearranging the terms and exponentiating the last inequality, we get
the dual Talagrand inequality (\ref{eq:dt}). We now show that $\phi(t)$
is non-increasing. The two main ingredients to prove that are the
q-Logarithmic Sobolev inequality and the solution to the Hamilton-Jacobi
problem (\ref{eq:}).

Fix $t\in(0,1].$ For $s>0,$ we have 

\[
\underset{s\rightarrow0^{+}}{lim}\frac{\phi(t+s)-\phi(t)}{s}=\underset{s\rightarrow0^{+}}{lim}\frac{1}{s}\left(\frac{1}{K(t+s)^{n}}-\frac{1}{Kt^{n}}\right)log\int_{\mathbb{G}}e^{K(t+s)^{n}Q_{t+s}f}d\mu
\]

\begin{equation}
+\underset{s\rightarrow0^{+}}{lim}\frac{1}{Kt^{n}s}\left(log\int_{\mathbb{G}}e^{K(t+s)^{n}Q_{t+s}f}d\mu-log\int_{\mathbb{G}}e^{Kt^{n}Q_{t}f}d\mu\right).\label{eq:2-1}
\end{equation}

As $s\rightarrow0^{+},$ the first term on the right-hand side of
(\ref{eq:2-1}) converges to 

\[
-\frac{n}{Kt^{n+1}}log\left(\int_{\mathbb{G}}e^{Kt^{n}Q_{t}f}d\mu\right).
\]

The limit of the second term of (\ref{eq:2-1}) is

\[
\frac{1}{Kt^{n}}\frac{1}{\int_{\mathbb{G}}e^{Kt^{n}Q_{t}f}d\mu}\underset{s\rightarrow0^{+}}{lim}\left[\frac{1}{s}\left(\int_{\mathbb{G}}e^{K(t+s)^{n}Q_{t+s}f}d\mu-\int_{\mathbb{G}}e^{Kt^{n}Q_{t}f}d\mu\right)\right]
\]

rewriting, we get

\[
=\frac{1}{Kt^{n}}\frac{1}{\int_{\mathbb{G}}e^{Kt^{n}Q_{t}f}d\mu}\underset{s\rightarrow0^{+}}{lim}\left[\int_{\mathbb{G}}\frac{e^{K(t+s)^{n}Q_{t+s}f}-e^{Kt^{n}Q_{t+s}f}}{s}d\mu+\int_{\mathbb{G}}\frac{e^{Kt^{n}Q_{t+s}f}-e^{Kt^{n}Q_{t}f}}{s}d\mu\right]
\]

so, using the dominated convergence theorem for the first term, we
get

\begin{equation}
=\frac{1}{Kt^{n}}\frac{\int_{\mathbb{G}}Knt^{n-1}Q_{t}fe^{Kt^{n}Q_{t}f}d\mu}{\int_{\mathbb{G}}e^{Kt^{n}Q_{t}f}d\mu}+\frac{1}{Kt^{n}}\frac{1}{\int_{\mathbb{G}}e^{Kt^{n}Q_{t}f}d\mu}\underset{s\rightarrow0^{+}}{lim}\left[\int_{\mathbb{G}}\frac{e^{Kt^{n}Q_{t+s}f}-e^{Kt^{n}Q_{t}f}}{s}d\mu\right].\label{eq:3-1}
\end{equation}

In order to compute the limit in (\ref{eq:3-1}), we use the Hamilton-Jacobi
equation in Theorem 5, which has the solution $Q_{t}f,$ so
\[
\frac{\partial Q_{t}f}{\partial_{t}}=-\frac{\left|\triangledown Q_{t}f\right|^{q}}{q},
\]

so,
\begin{equation}
\underset{s\rightarrow0^{+}}{lim}\left(\frac{e^{Kt^{n}Q_{t+s}f}-e^{Kt^{n}Q_{t}f}}{s}\right)=Kt^{n}e^{Kt^{n}Q_{t}f}\frac{\partial Q_{t}f}{\partial_{t}}=-Kt^{n}e^{Kt^{n}Q_{t}f}\frac{\left|\triangledown Q_{t}f\right|^{q}}{q}.\label{eq:4-1}
\end{equation}

Since $Q_{(.)}f(.)$ is Lipschitz on $\mathbb{G}\times\mathbb{R}_{+},$ $Q_{t+s}f=Q_{t}f+O(s)$
holds uniformly on $\mathbb{G}.$ Since $Q_{t}f(x)$ is uniformly bounded in
$x,$ we can use the dominated convergence theorem and (\ref{eq:4-1}),
to find the limit in (\ref{eq:3-1}):
\[
\underset{s\rightarrow0^{+}}{lim}\left[\int_{\mathbb{G}}\frac{e^{Kt^{n}Q_{t+s}f}-e^{Kt^{n}Q_{t}f}}{s}d\mu\right]=-Kt^{n}\int_{\mathbb{G}}e^{Kt^{n}Q_{t}f}\frac{\left|\triangledown Q_{t}f\right|^{q}}{q}d\mu.
\]
In other words, equation (\ref{eq:3-1}) becomes: 
\[
\underset{s\rightarrow0^{+}}{lim}\frac{1}{Kt^{n}s}\left(log\int_{\mathbb{G}}e^{K(t+s)^{n}Q_{t+s}f}d\mu-log\int_{\mathbb{G}}e^{Kt^{n}Q_{t}f}d\mu\right)
\]
\[
=\frac{1}{Kt^{n+1}}\frac{1}{\int_{\mathbb{G}}e^{Kt^{n}Q_{t}f}d\mu}\left[\int_{\mathbb{G}}nKt^{n}Q_{t}fe^{Kt^{n}Q_{t}f}d\mu-\int_{\mathbb{G}}Kt^{n+1}\frac{\left|\triangledown Q_{t}f\right|^{q}}{q}e^{Kt^{n}Q_{t}f}d\mu\right].
\]
Replacing the last equation in (\ref{eq:2-1}), we get:
\[
\underset{s\rightarrow0^{+}}{lim}\frac{\phi(t+s)-\phi(t)}{s}=\frac{1}{Kt^{n+1}}\frac{1}{\int_{\mathbb{G}}e^{Kt^{n}Q_{t}f}d\mu}
\]
\[
\times[-nlog\left(\int_{\mathbb{G}}e^{Kt^{n}Q_{t}f}d\mu\right)\int_{\mathbb{G}}e^{Kt^{n}Q_{t}f}d\mu+\int_{\mathbb{G}}nKt^{n}Q_{t}fe^{Kt^{n}Q_{t}f}d\mu
\]
\[
-\int_{\mathbb{G}}Kt^{n+1}\frac{\left|\triangledown_{\mathbb{}}Q_{t}f\right|^{q}}{q}e^{Kt^{n}Q_{t}f}d\mu].
\]
Let ${\displaystyle g=e^{\frac{1}{q}Kt^{n}Q_{t}f}},$ and ${\displaystyle n=\frac{1}{q-1},}$
we obtain:
\[
\underset{s\rightarrow0^{+}}{lim}\frac{\phi(t+s)-\phi(t)}{s}=\frac{1}{Kt^{\frac{q}{q-1}}}\frac{1}{\int_{\mathbb{G}}g^{q}d\mu}
\]

\[
\times[-\frac{1}{q-1}log\left(\int_{\mathbb{G}}g^{q}d\mu\right)\int_{\mathbb{G}}g^{q}d\mu+\frac{1}{q-1}\int_{\mathbb{G}}g^{q}log(g^{q})d\mu
\]
\begin{equation}
-\int_{\mathbb{G}}Kt^{\frac{q}{q-1}}\frac{\left|\triangledown Q_{t}f\right|^{q}}{q}g^{q}d\mu].\label{eq:5-1}
\end{equation}

Using the q-Logarithmic Sobolev inquality 
\[
Ent_{\mu}(g^{q})=\int_{\mathbb{G}}g^{q}log(g^{q})d\mu-log\left(\int_{\mathbb{G}}g^{q}d\mu\right)\int_{\mathbb{G}}g^{q}d\mu
\]
\[
\leq(q-1)\left(\frac{q}{K}\right)^{q-1}\int_{\mathbb{G}}|\triangledown g|^{q}d\mu
\]

in (\ref{eq:5-1}):
\[
\underset{s\rightarrow0^{+}}{lim}\frac{\phi(t+s)-\phi(t)}{s} \leq \frac{1}{Kt^{\frac{q}{q-1}}}\frac{1}{\int_{\mathbb{G}}g^{q}d\mu}
\]

\[
\times\left[\left(\frac{q}{K}\right)^{q-1}\int_{\mathbb{G}}|\triangledown g|^{q}d\mu-\int_{\mathbb{G}}Kt^{\frac{q}{q-1}}\frac{\left|\triangledown_{\mathbb{}}Q_{t}f\right|^{q}}{q}g^{q}d\mu\right]
\]
\[
=\frac{1}{Kt^{\frac{q}{q-1}}}\frac{1}{\int_{\mathbb{G}}g^{q}d\mu}\left[\left(\frac{q}{K}\right)^{q-1}\int_{\mathbb{G}}|\triangledown{\displaystyle (e^{\frac{1}{q}Kt^{n}Q_{t}f})}|^{q}d\mu-\int_{\mathbb{G}}Kt^{\frac{q}{q-1}}\frac{\left|\triangledown Q_{t}f\right|^{q}}{q}g^{q}d\mu\right]=0.
\]

Hence, $\phi(t)$ is non-increasing, and we obtain the dual Talagrand
inequality . 
\end{proof}

\section{ Hypercontractivty of Hamilton-Jacobi solutions for ${\displaystyle d\mu=\frac{e^{-U(d)}}{Z}d\lambda}$}

Denote $||.||_{p},$ $p\in\mathbb{R},$ the $L^{p}-$norms with respect
to $\mu,$ and ${\displaystyle ||f||_{0}=e^{\int log|f|d\mu}}$ whenever
$log|f|$ is $\mu-$integrable. Also, consider the Hamilton-Jacobi
problem on a Carnot group $\mathbb{G}$ as in Theorem 5. The following
Hypercontractivity theorem is inspired by Theorem 2.1 by Bobkov et
al. in \cite{key-53}, but with the measure ${\displaystyle d\mu=\frac{e^{-U(d)}}{Z}d\lambda}$
and the Euclidean gradient replaced by the sub-gradient $\triangledown.$
\begin{thm}
Assume we have the following 2-Logarithmic Sobolev inequality with
the measure ${\displaystyle d\mu=\frac{e^{-U(d)}}{Z}d\lambda}$, and
in the setting of the Carnot group:

\[
\rho Ent_{\mu}(f^{2})=\rho\int_{\mathbb{G}}f^{2}log(f^{2})d\mu-\rho log\left(\int_{\mathbb{G}}f^{2}d\mu\right)\int_{\mathbb{G}}f^{2}d\mu\leq2\int_{\mathbb{G}}|\triangledown f|^{2}d\mu.
\]

Then, for every bounded measurable function $f$ on $\mathbb{G},$ every $t\geq0,$
and every $a\in\mathbb{R},$

\[
||e^{Q_{t}f}||_{a+\rho t}\leq||e^{f}||_{a}.
\]
\end{thm}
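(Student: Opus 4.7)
The plan is to reduce the inequality to the monotonicity of a single scalar function of $t$, in the spirit of Gross's argument for classical hypercontractivity combined with the Hamilton--Jacobi method of Bobkov--Gentil--Ledoux. For $a+\rho t\neq 0$, define
\[
\Phi(t):=\frac{1}{a+\rho t}\log\int_{\mathbb{G}}e^{(a+\rho t)Q_{t}f}d\mu,
\]
so that $\Phi(t)=\log\|e^{Q_{t}f}\|_{a+\rho t}$ and $\Phi(0)=\log\|e^{f}\|_{a}$. The entire theorem reduces to showing that $\Phi$ is non-increasing on $[0,\infty)$; exponentiating $\Phi(t)\leq\Phi(0)$ gives exactly the desired inequality. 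The degenerate cases $a=0$ or $a+\rho t=0$ will be recovered at the end by sending $a+\rho t\to 0$ and using the convention $\|g\|_{0}=e^{\int\log|g|d\mu}$, noting that since $f$ is bounded so is $Q_{t}f$, which provides the integrability required for the limit.

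To compute $\Phi'(t)$, I would write $\beta(t)=a+\rho t$ and $I(t)=\int e^{\beta(t)Q_{t}f}d\mu$, and differentiate under the integral sign. The Hamilton--Jacobi equation from Theorem 5, specialised to $q=2$, gives $\partial_{t}Q_{t}f=-\tfrac{1}{2}|\triangledown Q_{t}f|^{2}$, so that
\[
I'(t)=\rho\int Q_{t}f\cdot e^{\beta(t)Q_{t}f}d\mu-\frac{\beta(t)}{2}\int|\triangledown Q_{t}f|^{2}e^{\beta(t)Q_{t}f}d\mu.
\]
The key algebraic move is then the substitution $g:=e^{\beta(t)Q_{t}f/2}$, which gives $g^{2}\log g^{2}=\beta(t)Q_{t}f\cdot g^{2}$ and $|\triangledown g|^{2}=\tfrac{\beta(t)^{2}}{4}|\triangledown Q_{t}f|^{2}g^{2}$. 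Substituting into the expression for $\Phi'(t)$ and cancelling powers of $\beta(t)$, every term reorganises into the shape of the $2$-Logarithmic Sobolev inequality:
\[
\Phi'(t)=\frac{1}{\beta(t)^{2}I(t)}\Bigl[\rho\,\mathrm{Ent}_{\mu}(g^{2})-2\int_{\mathbb{G}}|\triangledown g|^{2}d\mu\Bigr].
\]

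The conclusion $\Phi'(t)\leq 0$ then follows immediately from the assumed $2$-Logarithmic Sobolev inequality applied to $g$. The main technical obstacle is the justification of differentiation under the integral, namely the interchange of $\partial_{t}$ with $\int_{\mathbb{G}}\cdot\,d\mu$ used to compute $I'(t)$. This is precisely the step handled in the proof of Theorem 14: because $f$ is bounded and $(t,x)\mapsto Q_{t}f(x)$ is Lipschitz on $\mathbb{G}\times\mathbb{R}_{+}$, one has a uniform bound $Q_{t+s}f=Q_{t}f+O(s)$ and uniform boundedness of $Q_{t}f$, which legitimises the use of the dominated convergence theorem exactly as before. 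Once the monotonicity of $\Phi$ is established, the inequality $\|e^{Q_{t}f}\|_{a+\rho t}\leq\|e^{f}\|_{a}$ follows by exponentiation, and the remaining boundary values of $a$ are obtained by continuity.
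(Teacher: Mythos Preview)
Your proposal is correct and follows essentially the same route as the paper: the paper differentiates $F(t)=\|e^{Q_{t}f}\|_{\lambda(t)}$ directly while you differentiate $\Phi(t)=\log F(t)$, but both arrive at the same identity after the substitution $g=e^{\beta(t)Q_{t}f/2}$ and conclude $F'(t)\leq 0$ (equivalently $\Phi'(t)\leq 0$) from the $2$-Logarithmic Sobolev inequality, invoking boundedness and the Lipschitz property of $Q_{t}f$ for the dominated convergence step. Your additional remark on recovering the case $a+\rho t=0$ by continuity is a small refinement that the paper leaves implicit.
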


The following proof adapts the proof of Theorem 2.1 done by Bobkov
et al. in \cite{key-53} in the Euclidean setting, but uses the Hamilton-Jacobi
problem in the Carnot group setting (Theorem 5).
\begin{proof}
Let ${\displaystyle F(t)=||e^{Q_{t}f}||_{\lambda(t)},}$ with $\lambda(t)=a+\rho t,$
$t>0.$

For all $t>0$ and almost every $x,$ the partial derivatives ${\displaystyle \frac{\partial}{\partial_{t}}Q_{t}f(x)}$
exist, and by the Hamilton-Jacobi problem (Theorem 5),

\begin{equation}
{\displaystyle \frac{\partial}{\partial_{t}}Q_{t}f(x)}=-\frac{1}{2}|\triangledown Q_{t}f(x)|^{2}.\label{eq:6}
\end{equation}

So, $F$ is differentiable at every point $t>0$ where $\lambda(t)\neq0.$ 

Computing,
\begin{equation}
\left(F(t)^{\lambda(t)}\right)'=\rho F(t)^{\lambda(t)}log(F(t))+\lambda(t)F(t)^{\lambda(t)-1}F'(t).\label{eq:7}
\end{equation}
Also,
\[
\left(F(t)^{\lambda(t)}\right)'=\frac{\partial}{\partial_{t}}\left(\int_{\mathbb{G}}e^{\lambda(t)Q_{t}f}d\mu\right)
\]
\[
=\int_{\mathbb{G}}\left(\rho Q_{t}f+\lambda(t)\frac{\partial}{\partial_{t}}Q_{t}f\right)e^{\lambda(t)Q_{t}f}d\mu
\]
using (\ref{eq:6}),
\[
=\frac{\rho}{\lambda(t)}Ent_{\mu}(e^{\lambda(t)Q_{t}f})+\frac{\rho}{\lambda(t)}log\left(\int_{\mathbb{G}}e^{\lambda(t)Q_{t}f}d\mu\right)\int_{\mathbb{G}}e^{\lambda(t)Q_{t}f}d\mu-\frac{\lambda(t)}{2}\int_{\mathbb{G}}|\triangledown Q_{t}f(x)|^{2}e^{\lambda(t)Q_{t}f}d\mu
\]
\[
=\frac{\rho}{\lambda(t)}Ent_{\mu}(e^{\lambda(t)Q_{t}f})+\rho F(t)^{\lambda(t)}log\left(F(t)\right)-\frac{\lambda(t)}{2}\int_{\mathbb{G}}|\triangledown Q_{t}f(x)|^{2}e^{\lambda(t)Q_{t}f}d\mu.
\]
Combining this with (\ref{eq:7}), we get
\[
\lambda(t)^{2}F(t)^{\lambda(t)-1}F'(t)=\rho Ent_{\mu}(e^{\frac{1}{2}\lambda(t)Q_{t}f})^{2}-\frac{\lambda(t)^{2}}{2}\int_{\mathbb{G}}|\triangledown Q_{t}f(x)|^{2}e^{\lambda(t)Q_{t}f}d\mu
\]
using the 2-Logarithmic Sobolev inequality,
\[
\leq\int_{\mathbb{G}}|\triangledown_{\mathbb{G}}e^{\frac{\lambda(t)Q_{t}f}{2}}|^{2}d\mu-\frac{\lambda(t)^{2}}{2}\int_{\mathbb{G}}|\triangledown Q_{t}f(x)|^{2}e^{\lambda(t)Q_{t}f}d\mu
\]
\[
=\frac{\lambda(t)^{2}}{4}\int_{\mathbb{G}}|\triangledown_{\mathbb{G}}Q_{t}f(x)|^{2}e^{\lambda(t)Q_{t}f}d\mu-\frac{\lambda(t)^{2}}{2}\int_{\mathbb{G}}|\triangledown Q_{t}f(x)|^{2}e^{\lambda(t)Q_{t}f}d\mu
\]
\[
\leq0,
\]
for all $t>0.$ Thus, $F'(t)\leq0$ for all $t>0.$ Since $F$ is
continuous, then it is non-increasing. Hence, $F(t)\leq F(0),$ for
all $t>0.$
\[
||e^{Q_{t}f}||_{\lambda(t)}\leq||e^{Q_{0}f}||_{\lambda(0)}.
\]
So,
\[
||e^{Q_{t}f}||_{a+\rho t}\leq||e^{f}||_{a}.
\]
\end{proof}

\end{document}